\numberwithin{equation}{section}
\title{Multitime hybrid differential games with \\ multiple integral functional}
\author{Constantin Udri\c ste, Elena-Laura Otob\^{i}cu, Ionel \c Tevy}
\begin{document}
\date{}
\renewcommand{\abstractname}{}

\maketitle

\begin{abstract}
A multiple integral functional is equivalent to a curvilinear integral functional,
 if the domain is a hyper-parallelepiped, but equivalence is only theoretical. The introduction
 of this kind of functionals in multitime optimal control problems, particularly in multitime differential games,
 is due to recent works of Udriste research group. The purpose of this paper
 is to introduce those ingredients that are necessary to formulate and to prove theorems
 about multitime differential games based on a multiple integral functional and an $m$-flow as constraint.
 The most important idea is to use a generating vector field for basic functions.
 The original results include: fundamental properties of multitime upper and lower values,
 viscosity solutions of multitime (dHJIU) PDEs,
representation formula of viscosity
solutions for a multitime (dHJ) PDE, and max-min representations.
\end{abstract}

\noindent {\bf Mathematics Subject Classification 2010}: 49L20, 91A23, 49L25.

\noindent{\bf Key words}: multitime hybrid differential games, multiple integral cost,
divergence type PDE, multitime viscosity solution, multitime dynamic programming.

\maketitle \numberwithin{equation}{section}
\newtheorem{theorem}{Theorem}[section]
\newtheorem{lemma}[theorem]{Lemma}
\newtheorem{corollary}[theorem]{Corollary}
\newtheorem{definition}[theorem]{Definition}
\newtheorem{remark}[theorem]{Remark}

\section{Multitime hybrid differential game with \\multiple integral functional}

Let $t=(t^\alpha)=(t^1,\ldots ,t^m)\in \Omega_{0T}\subset \mathbb{R}^m_+$, $\alpha =1,...,m,$ be an evolution multi-parameter (multi-time), $dt=dt^1\wedge \ldots \wedge dt^m$ is the volume element ($m$-form) in $\mathbb{R}^m_+,$ $\Omega_{0T}$ is the $m$-dimensional parallelepiped fixed by the diagonal opposite points $0=(0^1,\ldots,0^m)$ and $T=(T^1,\ldots,T^m)$ which is equivalent to the closed interval $0\leq t\leq T$ via the product order on $\mathbb{R}^m_+,$ a $C^2$ state vector
$x:\Omega_{0T}\rightarrow \mathbb{R}^n, x(t)=(x^i(t)),$ a $C^1$
control vector $u:\Omega_{0T}\rightarrow U\subset \mathbb{R}^{p}$, $u(t)=(u^a(t))$,
for the first equip of $p$ players (who wants to maximize), a $C^1$ control vector
$v:\Omega_{0T}\rightarrow V\subset \mathbb{R}^{q},$ $v(t)=(v^b(t))$,
for the second equip of $q$ players (who wants to minimize),
$u(\cdot)=\Phi(\cdot,\eta_1(\cdot)), v(\cdot)=\Psi(\cdot,\eta_2(\cdot)),$
a running cost $L(t,x(t),u(t),v(t))$ as a
nonautonomous continuous Lagrangian, a terminal cost (penalty term) $g(x(T))$
and the $C^1$ vector fields $X_\alpha=(X_\alpha^i)$ satisfying the complete
integrability conditions (CIC) $D_\beta X_\alpha=D_\alpha X_\beta)$ (m-flow type problem).

In this paper, a multitime  hybrid differential game is given by a multitime dynamics
(PDE system contolled by two controllers) and a target including a multiple integral functional.
Our new approach here is to define and use the generating vector field of the value of the differential game.
This original idea is coming from the multitime optimal theory developed in [9]-[19].

We want to analyse a multitime differential game whose Bolza payoff is
the sum between a multiple integral (volume) and a function of the final
event (the terminal cost) and whose evolution PDE is an m-flow:\\

{\it find
$$\min_{v(\cdot)\in V}\max_{u(\cdot)\in U}I(u(\cdot),v(\cdot))=\int_{\Omega_{0T}} L (s,x(s),u(s),v(s))ds + g(x(T)),$$
subject to the Cauchy problem
$$\frac{\partial x^i}{\partial s^\alpha}(s)=X^i_\alpha(s,x(s),u(s),v(s)),$$
$$x(0)=x_0,\, s\in \Omega_{0T}\subset \mathbb{R}_+^m,\, x\in \mathbb{R}^n.$$}

Let $u=(u^a)$, $a=1,...,p$, $v=(v^b)$, $b=1,...,q$. Let $D_\alpha$
be the total derivative operator and $[X_{\alpha},X_{\beta}]$ be the bracket of vector fields.
Suppose the piecewise complete integrability conditions (CIC)
$$ \left( \frac{\partial X_{\alpha}}{\partial u^a}\delta^{\gamma}_{\beta} -  \frac{\partial X_{\beta}}{\partial u^a}\delta^{\gamma}_{\alpha}\right)\frac{\partial u^a}{\partial s^{\gamma}}+\left( \frac{\partial X_{\alpha}}{\partial v^b}\delta^{\gamma}_{\beta} -  \frac{\partial X_{\beta}}{\partial v^b}\delta^{\gamma}_{\alpha}\right)\frac{\partial v^b}{\partial s^{\gamma}}=\left[ X_{\alpha},X_{\beta}\right] + \frac{\partial X_{\beta}}{\partial s^{\alpha}} - \frac{\partial X_{\alpha}}{\partial s^{\beta}}, $$
are satisfied throughout.

We vary the starting multitime and the initial point. We obtain a larger family of
similar multitime problems based on the functional
$$I_{t,x}(u(\cdot),v(\cdot))=\int_{\Omega_{tT}} L (s,x(s),u(s),v(s))ds+g(x(T))$$
and the multitime evolution constraint (Cauchy problem for first order PDEs system)
$$\frac{\partial x^i}{\partial s^\alpha}(s)=X^i_\alpha(s,x(s),u(s),v(s)),$$
$$x(t)=x,\, s\in \Omega_{tT}\subset \mathbb{R}_+^m,\, x\in \mathbb{R}^n.$$

We assume that, for some constant 1-form $A=(A_\alpha)$ and all $t\in \Omega_{0T}, x, \hat{x}\in \mathbb{R}^n, u\in U,v\in V,$ each vector field $X_\alpha:\Omega_{0T}\times \mathbb{R}^n\times U\times V\rightarrow \mathbb{R}^n$ is uniformly continuous and satisfies
$$\left\{\begin{array}{ll}
\Vert X_\alpha(t,x,u,v\Vert\leqslant A_\alpha\\
\Vert X_\alpha(t,x,u,v)-X_\alpha(t,\hat{x},u,v)\Vert\leqslant A_\alpha\Vert x-\hat{x}\Vert,
\end{array}\right.$$
Suppose the functions
$$g:\mathbb{R}^n\rightarrow \mathbb{R}, \quad L:\Omega_{0T}\times \mathbb{R}^n\times U\times V\rightarrow \mathbb{R}$$
are uniformly continuous and satisfy the boundedness conditions
$$\left\{\begin{array}{ll}
\vert g(x)\vert\leqslant B\\
\vert g(x)-g(\hat{x})\vert\leqslant B\Vert x-\hat{x}\Vert,
\end{array}\right.$$
$$\left\{\begin{array}{ll}
\vert L(t,x,u,v\vert\leqslant C\\
\vert L(t,x,u,v)-L(t,\hat{x},u,v)\vert\leqslant C\Vert x-\hat{x}\Vert,
\end{array}\right.$$
for constants $B, C$ and all $t\in \Omega_{0T},\,\, x, \hat{x}\in \mathbb{R}^n,\,\, u\in U,v\in V.$

\section{Control sets and value functions}

Here we include: control sets, strategies, value functions, and generating vector fields.

\begin{definition}
(i) The set
$$\mathcal {U}(t)=\left\lbrace u:\mathbb{R}^m_+\rightarrow U \vert \ u(\cdot) \mathrm{ \ is \ measurable \ and \ satisfies \ CIC}\right\rbrace $$
is called \textbf{the control set for the first equip of players}.

(ii) The set
$$\mathcal {V}(t)=\left\lbrace v:\mathbb{R}^m_+\rightarrow V \vert \ v(\cdot) \mathrm{ \ is \ measurable \ and \ satisfies \ CIC}\right\rbrace $$
is called \textbf{the control set for the second equip of players}.
\end{definition}

\begin{definition}
(i) A map $\Phi:\mathcal {V}(t)\rightarrow \mathcal {U}(t)$ is called \textbf{a strategy for the first equip of players}, if the equality $v(\tau)=v^\star(\tau),\, t\leq \tau \leq s \leq T$ implies $\Phi[v](\tau)=\Phi[v^\star](\tau).$

 (ii) A map $\Psi:\mathcal {U}(t)\rightarrow \mathcal {V}(t)$ is called \textbf{a strategy for the second equip of players}, if the equality $u(\tau)=u^\star(\tau),\,t\leq \tau \leq s \leq T$ implies $\Psi[u](\tau)=\Psi[u^\star](\tau).$
\end{definition}

Let $ \mathcal{A}(t) \mathrm {\ be \textbf{\ the set of strategies for the first equipe of players}}$
 and $\mathcal{B}(t) \mathrm {\ be \textbf{\ the set of strategies for the second equip of players}}.$

\begin{definition}
(i) The function
$$ m(t,x)=\min_{\Psi\in \mathcal{B}} \max_{u(\cdot)\in \mathcal{U}} I_{t,x}[u(\cdot),\Psi[u](\cdot)]$$
is called \textbf{the multitime lower value function}.

(ii) The function
$$M(t,x)=\max_{\Phi\in \mathcal{A}} \min_{v(\cdot)\in \mathcal{V}} I_{t,x}[\Phi[v](\cdot),[v](\cdot)]$$
is called \textbf{the multitime upper value function}.
\end{definition}

The most important ingredient in our theory is the idea of {\bf generating vector field} (see \cite{[19]}).

\begin{definition}
Let $D_{\alpha}$ be the total derivative.
A vector field $\textbf{u}=(\textbf{u}^{\alpha}(t,x))$ is called \textbf{a generating vector field} of the function $u(t,x),$ if
$$u(T,x(T))=C_{hyp}+u(t,x(t))+\int_{\Omega_{tT}}D_{\alpha}\textbf{u}^{\alpha}(s,x(s))\,ds.$$
\end{definition}

\begin{remark}
Two multitime Lagrangians which differs by a total divergence term have the same Euler-Lagrange PDEs.
\end{remark}

\begin{definition} Let $D_{\alpha}$ be the total derivative.

(i) The vector field $\textbf{m}(t,x)=(\textbf{m}^{\alpha}(t,x))$ is called
\textbf{the generating lower vector field} of the lower value function $m(t,x),$ if
$$m(T,x(T))=c_{hyp}+m(t,x(t))+\int_{\Omega_{tT}}D_{\alpha}\textbf{m}^{\alpha}(s,x(s))ds.$$

(ii) The vector field $\textbf{M}(t,x)=(\textbf{M}^{\alpha}(t,x))$ is called \textbf{the generating upper vector field} of the upper value function $M(t,x),$ if
$$M(T,x(T))=C_{hyp}+M(t,x(t))+\int_{\Omega_{tT}}D_{\alpha}\textbf{M}^{\alpha}(s,x(s))ds.$$
\end{definition}

\section{Multitime dynamic programming \\optimality conditions}

Let us give explicit formulas for lower and upper value functions which represent in fact
multitime dynamic programming optimality conditions.

\begin{theorem}\textbf{(multitime dynamic programming optimality conditions)}
For each pair of strategies $(\Phi,\Psi),$ the lower and upper value functions can be written respectively in the form

\begin{equation}\begin{split}
m(t,x)\ & =\min_{\Psi\in \mathcal{B}(t)} \max_{u\in \mathcal{U}(t)}\bigg\{  \int_{\Omega_{tt+h}} L (s,x(s),u(s),\Psi[u](s))ds\bigg\} \\& +m(t+h,x(t+h))
 \end{split}\end{equation}

and

\begin{equation}\begin{split}
M(t,x)\ &  =\max_{\Phi\in \mathcal{A}(t)}\min_{v\in \mathcal{V}(t)}\bigg\{  \int_{\Omega_{tt+h}} L (s,x(s),\Phi [v](s),v(s))ds\bigg\} \\ & +M(t+h,x(t+h)),
\end{split}\end{equation}
for all $(t,x) \in \Omega_{tT}\times \mathbb{R}^n$ and all $h\in \Omega_{0T-t}.$
\end{theorem}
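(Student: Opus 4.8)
The plan is to establish the two identities by the standard dynamic programming argument, adapted to the multitime (multiple integral) setting, treating the lower value function in detail and noting that the upper value function is symmetric. Throughout, the key structural fact I would exploit is the additivity of the multiple integral over the parallelepiped decomposition $\Omega_{tT}=\Omega_{t,t+h}\cup \Omega_{t+h,T}$ (for $h\in\Omega_{0T-t}$, the two sub-boxes share only the ``slab'' faces, which have zero $m$-volume), so that
\begin{equation*}
I_{t,x}[u(\cdot),v(\cdot)]=\int_{\Omega_{t,t+h}}L(s,x(s),u(s),v(s))\,ds+I_{t+h,\,x(t+h)}[u(\cdot),v(\cdot)],
\end{equation*}
where $x(\cdot)$ solves the $m$-flow Cauchy problem with data $x(t)=x$. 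This, together with the causality (non-anticipativity) property in Definition~2.2 — namely that a strategy's value on $\Omega_{t,t+h}$ depends only on the opponent's control restricted to $\Omega_{t,t+h}$ — is what makes the recursion close.

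First I would prove the inequality $m(t,x)\le (\text{RHS})$. Fix $\varepsilon>0$ and choose a near-optimal strategy $\Psi\in\mathcal B(t+h)$ for the tail problem starting at $(t+h,\cdot)$; extend/concatenate it with an arbitrary admissible response on $\Omega_{t,t+h}$ to build a strategy $\tilde\Psi\in\mathcal B(t)$ for the full problem. For any $u(\cdot)\in\mathcal U(t)$, splitting the integral as above and using that $x(t+h)$ is determined by $u|_{\Omega_{t,t+h}}$ gives
\begin{equation*}
I_{t,x}[u(\cdot),\tilde\Psi[u](\cdot)]\le \int_{\Omega_{t,t+h}}L(s,x(s),u(s),\tilde\Psi[u](s))\,ds+m(t+h,x(t+h))+\varepsilon.
\end{equation*}
Taking $\max$ over $u(\cdot)$ and then the infimum over the choice of $\tilde\Psi$ on the initial slab yields the $\le$ direction after letting $\varepsilon\to0$. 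For the reverse inequality $m(t,x)\ge(\text{RHS})$, I would start from an arbitrary $\Psi\in\mathcal B(t)$, restrict it to produce, for each fixed admissible control on $\Omega_{t,t+h}$ and the resulting endpoint $x(t+h)$, an induced strategy on the tail that is admissible for the $(t+h)$-problem; bounding the tail by $m(t+h,x(t+h))$ from below and again using additivity gives the matching bound. One must check that the maxima/minima are attained (or replace them by $\sup/\inf$), which is where the uniform continuity and boundedness hypotheses on $X_\alpha, L, g$, plus compactness of $U,V$, enter; I would invoke those to guarantee the value functions are well defined and finite, and the strategy classes nonempty.

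The main obstacle I anticipate is the bookkeeping of admissibility under concatenation: one must verify that splicing a control/strategy defined on $\Omega_{t,t+h}$ with one defined on $\Omega_{t+h,T}$ still satisfies the piecewise complete integrability conditions (CIC) imposed on $\mathcal U(t),\mathcal V(t)$, since CIC is a differential constraint coupling all the $t^\alpha$-directions and is not automatically preserved across the interface $\{t+h\}$. I would handle this by working with the ``piecewise CIC'' formulation stated in Section~1 (which is exactly designed to allow jumps in the derivatives $\partial u^a/\partial s^\gamma$ across a hypersurface while keeping the bracket identity for $X_\alpha$), and by checking that the glued state trajectory $x(\cdot)$ remains $C^1$ piecewise and continuous at the interface, so the $m$-flow solution on $\Omega_{tT}$ is genuinely the concatenation of the two sub-solutions. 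A secondary, more routine point is that the generating vector field / total-divergence terms $c_{hyp}$ in Definition~2.6 are consistent with this splitting — i.e.\ the additive constant over $\Omega_{tT}$ is the sum of those over the two sub-boxes — which follows from additivity of $\int D_\alpha \mathbf m^\alpha\,ds$ over the decomposition. Once these compatibility facts are in place, the two displayed formulas follow by the symmetric argument sketched above.
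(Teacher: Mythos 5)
Your overall strategy coincides with the paper's: introduce the right-hand side as an auxiliary function, prove the two one-sided inequalities up to $O(\varepsilon)$ by selecting $\varepsilon$-optimal strategies and controls separately for the initial slab and for the tail, splice them across the interface, and let $\varepsilon\to 0$. The admissibility bookkeeping you flag is exactly what the paper does silently (it defines the spliced strategy piecewise on $\Omega_{tt+h}$ and $\Omega_{tT}\setminus\Omega_{tt+h}$, and likewise the spliced controls $u^\star$ and $u$), and your concern about whether the CIC constraint survives concatenation is a point the paper does not address at all, so raising it is to your credit.

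There is, however, one concrete error in the structural identity on which you hang the whole argument. For $m\ge 2$ it is \emph{not} true that $\Omega_{tT}=\Omega_{t,t+h}\cup\Omega_{t+h,T}$ up to a set of measure zero: already for $m=2$, $t=(0,0)$, $T=(1,1)$, $h=(1/2,1/2)$, the union of $[0,1/2]^2$ and $[1/2,1]^2$ misses two corner rectangles of total area $1/2$. Consequently your displayed identity $I_{t,x}[u(\cdot),v(\cdot)]=\int_{\Omega_{t,t+h}}L\,ds+I_{t+h,\,x(t+h)}[u(\cdot),v(\cdot)]$ fails in general, because $I_{t+h,\,x(t+h)}$ integrates $L$ only over $\Omega_{t+h,T}$ and the contribution of $L$ over the corner regions is lost. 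The true complement of $\Omega_{t,t+h}$ in $\Omega_{tT}$ is an L-shaped, non-box region, and that is the set over which the paper actually splices its strategies and controls ($\Omega_{tT}\setminus\Omega_{tt+h}$); the price is that the tail payoff then lives on $\Omega_{tT}\setminus\Omega_{tt+h}$ and must be reconciled with the definition of $m(t+h,\cdot)$, which is an integral over $\Omega_{t+h,T}$. That reconciliation is the genuinely delicate point of the multitime ($m\ge 2$) case; for $m=1$ your identity is exact and your argument closes, but for $m\ge 2$ you must either redefine the tail functional on the L-shaped region or explain why the corner contributions can be absorbed, and as written your proof does neither.
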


\begin{proof} To confirm the previous statement for the lower value function, we introduce a new function

\begin{equation}\begin{split}
w(t,x)\ &=\min_{\Psi\in \mathcal{B}(t)} \max_{u\in \mathcal{U}(t)}\bigg\{  \int_{\Omega_{tt+h}} L (s,x(s),u(s),\Psi[u](s))ds\bigg\} \\ & +m(t+h,x(t+h)).
\end{split}\end{equation}

It will be enough to prove that the lower value function $m(t,x)$ satisfies two inequalities, $m(t,x)\leq w(t,x)+2\varepsilon$ and $m(t,x)\geq w(t,x)-3\varepsilon,\, \forall \varepsilon >0$.

\begin{enumerate}[i)]

\item Let us prove the first inequality. For $\varepsilon >0,$ there exists a strategy $\Upsilon\in \mathcal{B}(t)$ such that

\begin{equation}\begin{split}\label{eq.7}
w(t,x)\ &\geqslant \max_{u\in \mathcal{U}(t)}\bigg\{  \int_{\Omega_{tt+h}} L (s,x(s),u(s),\Upsilon[u](s))ds \\ & +m(t+h,x(t+h))\bigg\}-\varepsilon.
\end{split}\end{equation}

We shall use the state $x(\cdot)$ which solves the (PDE), with initial condition $\overline{x}=x(t+h),$ on $\Omega_{tT}\setminus \Omega_{tt+h},$ for each $\overline{x}\in \mathbb{R}^n.$ The following equality
\begin{equation}\begin{split}
m(t+h,\overline{x}) \ & =\min_{\Psi\in \mathcal{B}(t+h)} \max_{u\in \mathcal{U}(t+h)}\bigg\{ \int_{\Omega_{t+hT}} L (s,x(s),u(s),\Psi[u](s))ds  \\ & +g(x(T))\bigg\}
\end{split}\end{equation}
holds. Thus there exists a strategy $\Upsilon_{\overline{x}}\in \mathcal{B}(t+h)$ such that

\begin{equation}\begin{split}\label{eq.8}
 m(t+h,\overline{x})\ & \geqslant \max_{u\in \mathcal{U}(t+h)}\bigg\{  \int_{\Omega_{t+hT}} L (s,x(s),u(s),\Upsilon_{\overline{x}}[u](s))ds \\& + g(x(T))\bigg\}-\varepsilon.
\end{split}\end{equation}

Define the strategy
$$\Psi\in\mathcal{B}(t), \Psi[u](s)\equiv
\left\{\begin{array}{ll}
\Upsilon[u](s) & s\in \Omega_{tt+h}\\
\Upsilon_{\overline{x}}[u](s) & s\in \Omega_{tT}\setminus\Omega_{tt+h},
\end{array}\right.$$

for each control $u\in \mathcal{U}(t).$ Replacing the inequality $\eqref{eq.8}$ in the inequation $\eqref{eq.7}$ for any $u\in \mathcal{U}(t)$, we can write

$$w(t,x)\geqslant   \int_{\Omega_{tT}} L (s,x(s),u(s),\Psi[u](s))ds+g(x(T))-2\varepsilon.$$

Going from side to side and applying maximum, we obtain

$$ \max_{u\in \mathcal{U}(t)}\left\lbrace  \int_{\Omega_{tT}} L (s,x(s),u(s),\Psi[u](s))ds+g(x(T))\right\rbrace\leq w(t,x)+2\varepsilon.$$
By the definition of the lower value function, we have
$$m(t,x)\leq w(t,x)+2\varepsilon.$$

\item For the reverse inequality, there exists a strategy $\Psi\in\mathcal{B}(t)$ for which we can write the inequality
\begin{equation}\label{eq.9}
m(t,x)\geqslant \max_{u\in \mathcal{U}(t)}\left\lbrace  \int_{\Omega_{tT}} L (s,x(s),u(s),\Psi[u](s))ds+g(x(T))\right\rbrace-\varepsilon.
\end{equation}

The definition of $w(t,x)$ implies
\begin{equation}\begin{split}
w(t,x) \ & \leqslant \max_{u\in \mathcal{U}(t)}\bigg\{  \int_{\Omega_{tt+h}} L (s,x(s),u(s),\Psi[u](s))ds \\& +m(t+h,x(t+h))\bigg\}
\end{split}\end{equation}
and consequently there exists a control $u^1\in \mathcal{U}(t)$ such that
\begin{equation}\begin{split}\label{eq.10}
w(t,x) \ & \leqslant   \int_{\Omega_{tt+h}} L (s,x(s),u^1(s),\Psi[u^1](s))ds  \\&  +m(t+h,x(t+h))+\varepsilon.
\end{split}\end{equation}

Define a new control $${u^\star}\in \mathcal{U}(t), {u^\star}(s)\equiv
\left\{\begin{array}{ll}
u^1(s) & s\in \Omega_{tt+h}\\
 u(s) & s\in \Omega_{tT}\setminus\Omega_{tt+h},
\end{array}\right.$$
for a control $u\in \mathcal{U}(t+h)$ and then define the strategy ${\Psi}^\star\in\mathcal{B}(t+h), \Psi^\star[u](s)\equiv\Psi[{u^\star}](s), s\in \Omega_{tT}\setminus\Omega_{tt+h}.$
We find the inequality
\begin{equation}\begin{split}
 \ & m(t+h,x(t+h)) \\ & \leq\max_{u\in \mathcal{U}(t+h)}\left\lbrace \int_{\Omega_{tt+h}} L(s,x(s),u(s),\Psi^\star[u](s))ds+g(x(T))\right\rbrace
\end{split}\end{equation}
and so there exists the control $u^2\in \mathcal{U}(t+h)$ for which
\begin{equation}\begin{split}\label{eq.11}
 \ & m(t+h, x(t+h))\\&  \leq \int_{\Omega_{tT}\setminus \Omega_{tt+h}} L(s,x(s),u^2(s),\Psi^\star[u^2](s))ds+g(x(T)) +\varepsilon.
\end{split}\end{equation}

Define a new control
$$u\in \mathcal{U}(t), u(s)\equiv
\left\{\begin{array}{ll}
u^1(s) & s\in \Omega_{tt+h}\\
u^2(s) & s\in \Omega_{tT}\setminus\Omega_{tt+h}.
\end{array}\right.$$

Then the inequalities $\eqref{eq.10}$ and $\eqref{eq.11}$ yield
$$ w(t,x)\leq \int_{\Omega_{tT}} L(s,x(s),u(s),\Psi[u](s))ds+g(x(T)) +2\varepsilon,$$
and so $\eqref{eq.9}$ implies the inequality
$$ w(t,x)\leq m(t,x)+3\varepsilon.$$

Since $\varepsilon>0$ is arbitrary, this inequality and  $m(t,x)\leq w(x,t)+2\varepsilon$ complete the proof.
\end{enumerate}

\end{proof}

\section{Boundedness and continuity of \\values functions}
Now we add boundedness and continuity properties of lower and upper values functions.
A basic idea is to replace the Cauchy problem with associated curvilinear integral equation.

\begin{theorem}\textbf{(boundedness and continuity of values functions)} The lower value function $m(t,x)$ and the upper value function $M(t,x)$ satisfy the boundedness conditions
$$\vert m(t,x)\vert, \vert M(t,x)\vert\leq D$$
$$\vert m(t,x)-m(\hat{t},\hat{x})\vert, \vert M(t,x)-M(\hat{t},\hat{x}\vert\leq E\,\, vol (\Omega_{\hat{t}\,t})+ D\,\Vert x-\hat{x}\Vert ,$$
for some constant $D,E$ and for all $t, \hat{t}\in \Omega_{0T}, x, \hat{x} \in \mathbb{R}^n.$
\end{theorem}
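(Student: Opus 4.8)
\noindent\emph{Proof strategy.} The plan is to establish the three statements one at a time: the uniform bound directly from the bounds on $L$ and $g$; the Lipschitz dependence on the initial state $x$ by passing from the Cauchy problem to its associated curvilinear integral equation and running a multitime Gronwall estimate; and the dependence on the initial multitime $t$ from the multitime dynamic programming optimality conditions (Theorem~3.1). For the bound I would just observe that, for every admissible pair $(u(\cdot),v(\cdot))$ and every $(t,x)$,
$$\bigl|I_{t,x}[u(\cdot),v(\cdot)]\bigr|\le\int_{\Omega_{tT}}|L|\,ds+|g(x(T))|\le C\,\mathrm{vol}(\Omega_{0T})+B=:D_0;$$
since $m$ and $M$ are produced from $I_{t,x}$ only by $\min$/$\max$ (resp.\ $\max$/$\min$) over strategies and controls, and these operations cannot break a uniform two-sided bound, we get $|m(t,x)|,|M(t,x)|\le D_0$.

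For the dependence on the state, I would fix $t$, two initial points $x,\hat x$ and a single pair of controls $(u(\cdot),v(\cdot))$, with associated solutions $x(\cdot),\hat x(\cdot)$. Using the (piecewise) complete integrability conditions to make the line integral path independent, the Cauchy problem is equivalent to
$$x(s)=x+\int_{\Gamma_{ts}}X_\alpha\bigl(\sigma,x(\sigma),u(\sigma),v(\sigma)\bigr)\,d\sigma^\alpha$$
along any increasing path $\Gamma_{ts}$ from $t$ to $s$. Subtracting the two equations and using $\Vert X_\alpha(\sigma,x,u,v)-X_\alpha(\sigma,\hat x,u,v)\Vert\le A_\alpha\Vert x-\hat x\Vert$ gives
$$\Vert x(s)-\hat x(s)\Vert\le\Vert x-\hat x\Vert+\sum_{\alpha=1}^{m}A_\alpha\int_{\Gamma_{ts}}\Vert x(\sigma)-\hat x(\sigma)\Vert\,|d\sigma^\alpha|,$$
so that a multitime Gronwall inequality (iterating the scalar one along a staircase path) yields $\Vert x(s)-\hat x(s)\Vert\le K\Vert x-\hat x\Vert$ with $K:=\exp\bigl(\sum_\alpha A_\alpha T^\alpha\bigr)$, uniformly in $s$ and in the controls. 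Feeding this into $I_{t,x}$ together with the Lipschitz bounds on $L$ and $g$ would give
$$\bigl|I_{t,x}[u(\cdot),v(\cdot)]-I_{t,\hat x}[u(\cdot),v(\cdot)]\bigr|\le\bigl(C\,\mathrm{vol}(\Omega_{0T})+B\bigr)K\Vert x-\hat x\Vert=:D_1\Vert x-\hat x\Vert;$$
since $\mathcal U(t),\mathcal V(t),\mathcal A(t),\mathcal B(t)$ do not depend on $x$, this inequality is inherited through the nested $\min$/$\max$, hence $|m(t,x)-m(t,\hat x)|,|M(t,x)-M(t,\hat x)|\le D_1\Vert x-\hat x\Vert$.

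For the dependence on the multitime it is enough, after inserting the componentwise minimum $\hat t\wedge t$ and using the triangle inequality, to compare $m(\hat t,x)$ with $m(t,x)$ for $\hat t\le t$; set $h:=t-\hat t$. Theorem~3.1 with base multitime $\hat t$ gives
$$m(\hat t,x)=\min_{\Psi\in\mathcal B(\hat t)}\max_{u\in\mathcal U(\hat t)}\Bigl\{\int_{\Omega_{\hat t\,t}}L\bigl(s,x(s),u(s),\Psi[u](s)\bigr)\,ds+m\bigl(t,x(t)\bigr)\Bigr\},$$
with $x(\cdot)$ the trajectory from $(\hat t,x)$. From the curvilinear equation and $\Vert X_\alpha\Vert\le A_\alpha$ one has $\Vert x(t)-x\Vert\le\sum_\alpha A_\alpha h^\alpha$, and together with $\bigl|\int_{\Omega_{\hat t\,t}}L\,ds\bigr|\le C\,\mathrm{vol}(\Omega_{\hat t\,t})$ and the previous step this yields $|m(\hat t,x)-m(t,x)|\le C\,\mathrm{vol}(\Omega_{\hat t\,t})+D_1\sum_\alpha A_\alpha h^\alpha$, and the same for $M$ via its own dynamic programming identity. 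Combining the two estimates gives the asserted inequality with $D:=\max(D_0,D_1)$ and $E$ absorbing $C$ together with the first-order time increment $\sum_\alpha A_\alpha h^\alpha$.

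The hard part is this step on the state: rigorously replacing the first-order PDE Cauchy problem by the curvilinear integral equation --- which is exactly where the complete integrability conditions enter, to make the line integral path independent --- and proving the accompanying \emph{multitime} Gronwall inequality along increasing paths, since the one-dimensional argument does not transfer verbatim. Once that is in place, carrying the bounds through the $\min$/$\max$ over strategies is routine, because the strategy and control sets do not depend on the initial data.
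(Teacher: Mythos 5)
Your decomposition is genuinely different from the paper's. The paper does not separate the $x$-dependence from the $t$-dependence: it compares $M(t_1,x_1)$ and $M(t_2,x_2)$ in a single estimate, picking an $\varepsilon$-optimal strategy for one initial datum, extending/restricting controls and strategies between $\mathcal{V}(t_1)$ and $\mathcal{V}(t_2)$ so that the two games share the same controls on $\Omega_{0T}\setminus\Omega_{0t_2}$, and then bounding the difference of the two payoffs directly; it never invokes Theorem~3.1. You instead prove the Lipschitz bound in $x$ at fixed $t$ (same controls, curvilinear integral equation, multitime Gronwall) and the $t$-dependence separately via the dynamic programming identity, combining by the triangle inequality through $\hat t\wedge t$. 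Both routes are legitimate; yours is cleaner in that the $x$-step passes trivially through the $\min$/$\max$ (same index sets), whereas the paper must build the control/strategy matching explicitly, and your explicit Gronwall factor $K=\exp(\sum_\alpha A_\alpha T^\alpha)$ is more honest than the paper's estimate $\Vert x_1(s)-x_2(s)\Vert\le\Vert A\Vert\,\ell(\Gamma_{t_1t_2})+\Vert x_1-x_2\Vert$, which tacitly ignores the growth of the difference of trajectories after $t_2$.

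There is, however, a genuine gap at the very end, in the sentence where you let $E$ ``absorb'' the first-order time increment $\sum_\alpha A_\alpha h^\alpha$. For $m\ge 2$ this absorption is impossible: $vol(\Omega_{\hat t\,t})=\prod_\alpha h^\alpha$ vanishes whenever a single component $h^\alpha$ is zero, while $\sum_\alpha A_\alpha h^\alpha$ does not, so no constant $E$ satisfies $\sum_\alpha A_\alpha h^\alpha\le E\,vol(\Omega_{\hat t\,t})$ uniformly. Your argument therefore proves a bound of the form $E\bigl(vol(\Omega_{\hat t\,t})+\ell(\Gamma_{\hat t\,t})\bigr)+D\Vert x-\hat x\Vert$, not the inequality as stated. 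You should be aware that the paper's own proof has exactly the same defect in disguise: after the estimate $\Vert x_1(s)-x_2(s)\Vert\le\Vert A\Vert\,\ell(\Gamma_{t_1t_2})+\Vert x_1-x_2\Vert$ is fed into the Lipschitz bounds for $L$ and $g$, the term proportional to $\ell(\Gamma_{t_1t_2})$ is silently dropped from the final line $C\,vol(\Omega_{t_1t_2})+(C\,vol(\Omega_{0T})+B)\Vert x_1-x_2\Vert$. So your proof exposes a real weakness of the theorem's statement rather than introducing a new error, but as written the last step does not deliver the claimed modulus of continuity and should either be corrected to include the length term or accompanied by an argument showing why it can be removed.
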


\begin{proof} Because the two value functions have analogous definitions,
we prove only the statement for upper value function $M(t,x).$

Since $\vert g(x)\vert\leqslant  B, \vert L(t,x,u,v)\vert\leqslant C$, we find

\begin{equation}\begin{split}
\vert I_{t,x}(u(\cdot),v(\cdot))\vert \ & =\bigg\vert \int_{\Omega_{tT}} L (s,x(s),u(s),v(s))ds+g(x(T))\bigg\vert \\& \leq \bigg\vert \int_{\Omega_{tT}} L (s,x(s),u(s),v(s))ds \bigg\vert + \vert g(x(T))\vert \\& \leq \int_{\Omega_{tT}} \vert L (s,x(s),u(s),v(s))\vert  ds  +\vert g(x(T))\vert \\& \leq C \int_{\Omega_{tT}}  ds +  B \leq C \,\,vol(\Omega_{0T}) + B=D \\ & \Longrightarrow \vert M(t,x)\vert\leq D,
\end{split}\end{equation}
for all $u(\cdot)\in \mathcal{U}(t),v(\cdot)\in \mathcal{V}(t).$

Let $x_1,x_2\in \mathbb{R}^n, t_1, t_2 \in \Omega_{0T}.$ For $\varepsilon>0$ and the strategy $\Phi\in \mathcal{A}(t_1),$ we have

\begin{equation}\label{eq:1}
M(t_1,x_1)\leq \min_{v \in \mathcal{V}(t_1)} I(\Phi[v],v)+\varepsilon.
\end{equation}

Define the control
$$\overline{v}\in \mathcal{V}(t_1),\overline{v}(s)\equiv
\left\{\begin{array}{ll}
{v}^1(s) & s\in \Omega_{0t_2}\setminus\Omega_{0t_1}\\
{v}(s) & s\in \Omega_{0T}\setminus\Omega_{0t_2},
\end{array}\right.$$
for any $v \in \mathcal{V}(t_2)$ and some $v^1 \in V$ and for each $v\in \mathcal{V}(t_2), \underline{\Phi}\in \mathcal{A}(t_2)$ (the restriction of $\Phi$ over $\Omega_{0T}\setminus \Omega_{0t_1})$ by $\underline{\Phi}[v]=\Phi[\overline{v}], s\in \Omega_{0T}\setminus\Omega_{0t_2}.$

Take the control $v\in \mathcal{V}(t_2)$ such that

\begin{equation}\label{eq:2}
M(t_2,x_2)\geq  I(\underline{\Phi}[v],v)-\varepsilon.
\end{equation}

By the inequality $\eqref{eq:1},$ we deduce

\begin{equation}\label{eq:3}
M(t_1,x_1)\leq  I(\Phi[\overline{v}],\overline{v})+\varepsilon.
\end{equation}

We know that the (unique, Lipschitz) solution $x(\cdot)$ of the Cauchy problem
$$\left\{\begin{array}{ll}
\frac{\partial x^i}{\partial s^\alpha}(s)=X^i_\alpha(s,x(s),u(s),v(s))\\
x(t)=x, \, s\in \Omega_{tT}\subset \mathbb{R}_+^m,\, x\in \mathbb{R}^n,\, i=\overline{1,n},\, \alpha =\overline{1,m},
\end{array}\right.$$
is the response to the controls $u(\cdot), v(\cdot)$ for $s\in \Omega_{0T}.$

We choose $x_1(\cdot)$ as solution of the Cauchy problem
$$\left\{\begin{array}{ll}
\frac{\partial x^i_1}{\partial s^\alpha}(s)=X^i_\alpha(s,x_1(s),\Phi[\overline{v}],\overline{v})\\
x_1(t_1)=x_1, \quad s\in \Omega_{0T}\setminus \Omega_{0t_1}
\end{array}\right.$$
and $x_2(\cdot)$ as solution of the Cauchy problem
$$\left\{\begin{array}{ll}
\frac{\partial x^i_2}{\partial s^\alpha}(s)=X^i_\alpha(s,x_2(s),\underline{\Phi}[v],v(s))\\
x_2(t_2)=x_2, \quad s\in \Omega_{0T}\setminus \Omega_{0t_2}.
\end{array}\right.$$
Equivalently, $x_1(\cdot)$ is solution of curvilinear integral equation
$$x_1(s)= x_1(t_1) + \int_{\Gamma_{t_1s}}X_\alpha(\sigma,x_1(\sigma),\Phi[\overline{v}](\sigma),\overline{v}(\sigma))\,d\sigma^\alpha$$
and $x_2(\cdot)$ is solution of curvilinear integral equation
$$x_2(s)= x_2(t_2) + \int_{\Gamma_{t_2s}}X_\alpha(\sigma,x_2(\sigma),\Phi[\overline{v}](\sigma),\overline{v}(\sigma))\,d\sigma^\alpha.$$
It follows that

$$\Vert x_1(t_2)-x_1 \Vert = \Vert x_1(t_2)-x_1(t_1)\Vert \leq \Vert A\Vert \,\ell(\Gamma_{t_1t_2}).$$

Because $v=\overline{v}$ and $\underline{\Phi}[v]=\Phi[\overline{v}],$ for $s\in \Omega_{0T}\setminus\Omega_{0t_2},$ we find the estimation

\begin{equation}\begin{split}
\Vert x_1(s)-x_2(s)\Vert \ & \leq \Vert x_1(t_1)-x_2(t_2)\Vert + \Vert \int_{\Gamma_{t_1t_2}}\cdots\Vert  \\ & \leq \Vert A \Vert \ell(\Gamma_{t_1t_2})+ \Vert x_1-x_2\Vert,\,\, \hbox{on}\,\, t_2\leq s\leq T.
\end{split}\end{equation}

Thus the inequalities $\eqref{eq:2}$ and $\eqref{eq:3}$ imply

$$
 M(t_1,x_1)-M(t_2,x_2) \leq  I(\Phi[\overline{v}],\overline{v})-I(\underline{\Phi}[v],v)+2\varepsilon
$$
$$\leq \Big\vert \int_{\Omega_{t_1t_2}} L(s,x_1(s),\Phi[\overline{v}](s),\overline{v}(s))\,ds$$
$$+\int_{\Omega_{t_2T}} (L (s,x_1(s),\underline{\Phi}[v](s),v(s))  -L (s,x_2(s),\underline{\Phi}[v](s),v(s)))\,ds$$
 $$+g(x_1(T))-g(x_2(T))+2\varepsilon\Big\vert
$$
$$\leq\int_{\Omega_{t_1t_2}} \vert L (s,x_1(s),\Phi[\overline{v}](s),\overline{v}(s))\vert\, ds$$
$$+\int_{\Omega_{t_2T}} \vert (L (s,x_1(s),\underline{\Phi}[v](s),v(s))  -L (s,x_2(s),\underline{\Phi}[v](s),v(s)))\vert \,ds$$
$$+\vert g(x_1(T))-g(x_2(T))\vert +2\varepsilon$$
$$\leq  C \, vol(\Omega_{t_1t_2}) + (C \, vol(\Omega_{0T})+B)\,\Vert x_1-x_2\Vert +2\varepsilon.$$

Since $\varepsilon$ is arbitrary, we obtain the inequality
\begin{equation}\label{eq:7}
M(t_1,x_1)-M(t_2,x_2)\leq E\,vol(\Omega_{t_1t_2}) +D \,\Vert x_1-x_2\Vert.
\end{equation}

Let $\varepsilon>0$ and choose the strategy $\Phi\in \mathcal{A}(t_2)$ such that

\begin{equation}\label{eq:4}
M(t_2,x_2)\leq \min_{v \in \mathcal{V}(t_2)} I(\Phi[v],v)+\varepsilon.
\end{equation}

For each control $v \in \mathcal{V}(t_1)$ and $s\in \Omega_{0T}\setminus\Omega_{0t_2},$ define the control $\underline{v}\in \mathcal{V}(t_2), \underline{v}(s)=v(s).$

For some $u^1 \in U,$ we define the strategy $\overline{\Phi}\in \mathcal{A}(t_1)$ (the restriction of $\Phi$ to $\Omega_{0T}\setminus\Omega_{0t_2}$) by

$$\overline{\Phi}[{v}]=
\left\{\begin{array}{ll}
u^1 & s\in \Omega_{0t_2}\setminus\Omega_{0t_1}\\
\Phi[\underline{v}] & s\in \Omega_{0T}\setminus\Omega_{0t_2}.
\end{array}\right.$$

Now choose a control $v\in \mathcal{V}(t_1)$ so that

\begin{equation}\label{eq:5}
M(t_1,x_1)\geq  I(\overline{\Phi}[v],v)-\varepsilon.
\end{equation}

By the inequality $\eqref{eq:4},$ we have

\begin{equation}\label{eq:6}
M(t_2,x_2)\leq  I(\Phi[\underline{v}],\underline{v})+\varepsilon.
\end{equation}

We choose $x_1(\cdot)$ as solution of the Cauchy problem
$$\left\{\begin{array}{ll}
\frac{\partial x_1^i}{\partial s^\alpha}(s)=X^i_\alpha(s,x_1(s),\overline{\Phi}[v],v(s)), s\in \Omega_{0T}\setminus\Omega_{0t_1} \\
x_1(t_1)=x_1, \quad s\in \Omega_{0T}\setminus \Omega_{0t_1}
\end{array}\right.$$
and $x_2(\cdot)$ as solution of the Cauchy problem
$$\left\{\begin{array}{ll}
\frac{\partial x_2^i}{\partial s^\alpha}(s)=X^i_\alpha(s,x_2(s),\Phi[\underline{v}],\underline{v}(s)), s\in \Omega_{0T}\setminus\Omega_{0t_2}\\
x_2(t_2)=x_2, \quad s\in \Omega_{0T}\setminus \Omega_{0t_2}.
\end{array}\right.$$

It follows that
$$\Vert x_1(t_2)-x_1 \Vert = \Vert x_1(t_2)-x_1(t_1)\Vert \leq \Vert A\Vert \,\ell(\Gamma_{t_1t_2}).$$

For $s\in \Omega_{0T}\setminus\Omega_{0t_2}, v=\underline{v}$ and $\overline{\Phi}[v]=\Phi[\underline{v}],$
we find the estimation
\begin{equation}\begin{split}
\Vert x_1(s)-x_2(s)\Vert \ & \leq \Vert x_1(t_1)-x_2(t_2)\Vert + \Vert \int_{\Gamma_{t_1t_2}}\cdots\Vert  \\ & \leq \Vert A \Vert \ell(\Gamma_{t_1t_2})+ \Vert x_1-x_2\Vert,\,\, \hbox{on}\,\, t_2\leq s\leq T.
\end{split}\end{equation}

Thus, the relations $\eqref{eq:5}$ and $\eqref{eq:6}$ imply

$$M(t_2,x_2)-M(t_1,x_1) \leq I(\Phi[\underline{v}],\underline{v}])-I(\overline{\Phi}[v],v])+2\varepsilon$$
  $$
  \leq   \bigg\vert - \int_{\Omega_{t_1t_2}} L (s,x_1(s),\overline{\Phi}[{v}](s),{v}(s))ds$$
  $$+\int_{\Omega_{t_2T}} (L(s,x_2(s),{\Phi}[\underline{v}](s),\underline{v}(s))-L (s,x_1(s),{\Phi}[\underline{v}](s),\underline{v}(s)))ds
 $$
 $$ +g(x_2(T))-g(x_1(T))+2\varepsilon \bigg\vert$$
 $$\leq  \int_{\Omega_{t_1t_2}} \vert L (s,x_1(s),\overline{\Phi}[{v}](s),{v}(s))\vert \,ds$$
  $$+\int_{\Omega_{t_2T}} \vert L(s,x_2(s),{\Phi}[\underline{v}](s),\underline{v}(s))-L (s,x_1(s),{\Phi}[\underline{v}](s),\underline{v}(s))\vert\, ds
 $$
 $$ +\vert g(x_2(T))-g(x_1(T))\vert+2\varepsilon $$
 $$\leq  C\, vol(\Omega_{t_1t_2}) + (C \, vol(\Omega_{0T})+B)\,\Vert x_2-x_1\Vert +2\varepsilon.$$

Since $\varepsilon$ is arbitrary, we obtain
\begin{equation}
M(t_2,x_2)-M(t_1,x_1)\leq E\,vol(\Omega_{t_1t_2}) +D \,\Vert x_1-x_2\Vert.
\end{equation}

By this inequality and $\eqref{eq:7},$ we prove the continuity of the lower and upper value functions.
\end{proof}

\section{Viscosity solutions of \\ multitime (dHJIU) PDEs}
The key original idea is that the generating
upper vector field $\textbf{M}=(\textbf{M}^\alpha)$
or the generating lower vector field $\textbf{m}=(\textbf{m}^\alpha)$
are solutions of (dHJIU) PDEs.

\begin{remark}
The generating upper vector field was introduced by the relation
\begin{equation}
\begin{split}
M(t+h) \ & =M(t)+C_{hyp}+\int_{\Omega_{tt+h}} D_\alpha \textit{\textbf{M}}^\alpha ds\\ & \Rightarrow M(t)-M(t+h)=-C_{hyp}-\int_{\Omega_{tt+h}} D_\alpha \textit{\textbf{M}}^\alpha\, ds.
\end{split}\end{equation}

The multitime dynamic programming optimality condition gives
\begin{equation}\begin{split}
\ & M(t) =\max_{\Phi\in \mathcal{A}(t)}\min_{v\in \mathcal{V}(t)}\bigg\{  \int_{\Omega_{tt+h}} L (s,x(s),\Phi [v](s),v(s))ds\bigg\} +M(t+h)\\ & \Rightarrow M(t)- M(t+h) =\max_{\Phi\in \mathcal{A}(t)}\min_{v\in \mathcal{V}(t)}\bigg\{  \int_{\Omega_{tt+h}} L (s,x(s),\Phi [v](s),v(s))\,ds\bigg\}. \end{split}\end{equation}

These two equalities suggest a multitime divergence Hamilton-Jacobi-Isaacs-Udri\c ste (dHJIU) PDE.
\end{remark}

\begin{theorem}\textbf{(PDEs for generating upper vector field, resp. lower vector field)}

The generating upper vector field $\textbf{M}=(\textbf{M}^\alpha(t,x))$ and the generating lower vector field $\textbf{m}=(\textbf{m}^\alpha(t,x))$ are the viscosity solutions of
\begin{itemize}
\item the multitime divergence type upper Hamilton-Jacobi-Isaacs-Udri\c ste \\(dHJIU) PDE
$$\frac{\partial \textbf{M}^{\alpha}}{\partial t^\alpha}(t,x)+\min_{v\in V} \max_{u\in U} \left\lbrace  \frac{\partial \textbf{M}^{\alpha}}{\partial x^i}(t,x) X_\alpha^i(t,x,u,v)+L(t,x,u,v)\right\rbrace =0,$$
which satisfies the terminal condition $\textbf{M}^\alpha(T,x)=g^\alpha(x),$
\item the multitime divergence type lower Hamilton-Jacobi-Isaacs-Udri\c ste \\(dHJIU) PDE
$$\frac{\partial \textbf{m}^\alpha}{\partial t^\alpha}+\max_{u \in U} \min_{v \in V} \left\lbrace  \frac{\partial \textbf{m}^\alpha}{\partial x^i}(t,x) X_\alpha^i(t,x,u,v)+L(t,x,u,v)\right\rbrace =0,$$
which satisfies the terminal condition $\textbf{m}^\alpha(T,x)=g^\alpha(x).$
\end{itemize}
\end{theorem}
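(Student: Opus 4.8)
The plan is to establish the assertion for the generating upper vector field $\textbf{M}$; the statement for $\textbf{m}$ follows verbatim after swapping the two teams of players and replacing $\min_v\max_u$ by $\max_u\min_v$. Two earlier results carry the weight: the multitime dynamic programming optimality condition (Theorem 3.1) and the boundedness and continuity of $M$ (Theorem 4.1); the latter, via the curvilinear-integral reformulation of the Cauchy problem used in its proof, also supplies the uniform Lipschitz dependence of the responses $x(\cdot)$ on the controls that we will need in order to pass to the limit.

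First I would fix the working identity. Combining the defining relation of the generating upper vector field with the optimality condition of Theorem 3.1, as already noted in the Remark, yields
$$-C_{hyp}-\int_{\Omega_{tt+h}}D_\alpha\textbf{M}^\alpha(s,x(s))\,ds=\max_{\Phi\in\mathcal{A}(t)}\min_{v\in\mathcal{V}(t)}\int_{\Omega_{tt+h}}L(s,x(s),\Phi[v](s),v(s))\,ds ,$$
and expanding the total derivative along a response $x(\cdot)$ to controls $u(\cdot),v(\cdot)$,
$$D_\alpha\textbf{M}^\alpha(s,x(s))=\frac{\partial\textbf{M}^\alpha}{\partial s^\alpha}(s,x(s))+\frac{\partial\textbf{M}^\alpha}{\partial x^i}(s,x(s))\,X_\alpha^i(s,x(s),u(s),v(s)) ,$$
rewrites the left-hand side as a multiple integral of exactly the expression occurring in the (dHJIU) PDE.

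Next comes the viscosity argument, which I would run exactly as in the proof of Theorem 3.1: the two one-sided inequalities are produced by freezing one team on a (near-)constant control over the small parallelepiped and letting the other team respond (near-)optimally, then splicing strategies. Concretely, let $\phi=(\phi^\alpha)\in C^1$ be a test vector field and $(t_0,x_0)$ a point at which $\textbf{M}-\phi$ has a local maximum (for the subsolution property) or a local minimum (for the supersolution property). Taking $h=\theta\,\textbf{1}$ with $\theta\downarrow 0$, dividing the working identity by $vol(\Omega_{t_0t_0+h})=\theta^m$, and using the uniform continuity of $L$ and of the $X_\alpha$ together with the Lipschitz dependence of the response $x(\cdot)$ on $\theta$, the left-hand side tends to $-\frac{\partial\textbf{M}^\alpha}{\partial t^\alpha}(t_0,x_0)-\frac{\partial\textbf{M}^\alpha}{\partial x^i}(t_0,x_0)X_\alpha^i$ at the controls used near $t_0$, while the right-hand side tends to $\min_{v\in V}\max_{u\in U}$ of the Hamiltonian $\frac{\partial\textbf{M}^\alpha}{\partial x^i}X_\alpha^i+L$ at $(t_0,x_0)$, the shrinking-window game value collapsing to the pointwise Isaacs value. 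Replacing $\textbf{M}$ by $\phi$ at the extremum — legitimate because $\int_{\Omega_{t_0t_0+h}}D_\alpha(\textbf{M}^\alpha-\phi^\alpha)(s,x(s))\,ds$ carries the sign prescribed by the local extremum as $\theta\to 0$ — turns the limiting equality into
$$\frac{\partial\phi^\alpha}{\partial t^\alpha}(t_0,x_0)+\min_{v\in V}\max_{u\in U}\Big\{\frac{\partial\phi^\alpha}{\partial x^i}(t_0,x_0)X_\alpha^i(t_0,x_0,u,v)+L(t_0,x_0,u,v)\Big\}\ \ge\ 0$$
in the subsolution case and the same with $\le$ in the supersolution case, i.e. $\textbf{M}$ is a viscosity solution of the upper (dHJIU) PDE. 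The terminal condition follows by evaluating at $t=T$: there $\Omega_{TT}$ is degenerate, so $M(T,x)=g(x)$, and the generating relation then identifies $\textbf{M}^\alpha(T,\cdot)$ with the components $g^\alpha$ of a generating decomposition of $g$.

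I expect the main obstacle to be precisely that shrinking-window limit on the right-hand side: one must exchange the $\max_\Phi\min_v$ over strategies on $\Omega_{t_0t_0+h}$ with the pointwise $\min_{v\in V}\max_{u\in U}$ of the integrand, which needs both the uniform-continuity/Lipschitz estimates and a careful miniature re-run of the strategy-splicing construction of Theorem 3.1. A secondary difficulty is pinning down the exact multitime sense of ``$\textbf{M}-\phi$ has a local maximum'' that keeps the sign of $\int_{\Omega_{t_0t_0+h}}D_\alpha(\textbf{M}^\alpha-\phi^\alpha)$ under control along trajectories as $\theta\to 0$, and bookkeeping the constant $C_{hyp}$ consistently with the chosen generating decompositions.
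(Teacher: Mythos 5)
The first thing to say is that the paper does not prove this theorem at all: immediately after the statement it declares ``The proof will be given in another paper.'' So there is no argument of the authors to compare yours against; the most the paper offers is the preceding Remark, which juxtaposes the defining relation of the generating upper vector field with the dynamic programming identity of Theorem 3.1 and says these two equalities \emph{suggest} the (dHJIU) PDE. Your ``working identity'' is exactly that juxtaposition, and your overall plan (Evans--Souganidis style: test functions, shrinking parallelepiped, strategy splicing to collapse the window game to the pointwise Isaacs value) is the natural route one would take.

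That said, as a proof your proposal has genuine gaps, and the two difficulties you flag at the end as ``obstacles'' are in fact the entire content of the theorem, not residual bookkeeping. First, the notion of viscosity solution being used is never defined --- neither by the paper nor by you --- for this divergence-type system: $\textbf{M}=(\textbf{M}^\alpha)$ has $m$ components, so ``$\textbf{M}-\phi$ has a local maximum at $(t_0,x_0)$'' has no meaning without specifying an order or reducing to a scalar quantity (presumably one must test the scalar $M$ generated by $\textbf{M}$, or the contracted divergence, but that choice changes the argument). Likewise $g^\alpha$ is undefined; the data of the problem is the scalar $g$, and the decomposition into a terminal vector field is not canonical. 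Second, your working identity equates $-C_{hyp}-\int_{\Omega_{tt+h}}D_\alpha\textbf{M}^\alpha(s,x(s))\,ds$, which depends on a \emph{particular} trajectory $x(\cdot)$ and hence on particular controls, with $\max_\Phi\min_v\int L$, which is optimized over all strategies; these are not equal for an arbitrary response $x(\cdot)$, and passing to the limit on the left ``at the controls used near $t_0$'' while the right is a game value is not legitimate. The standard repair is to put $\frac{\partial\phi^\alpha}{\partial x^i}X^i_\alpha+L$ together \emph{inside} the $\max_\Phi\min_v$ before shrinking the window, and then the exchange of the strategy-level $\max\min$ with the pointwise $\min_v\max_u$ of the integrand is the hard analytic step (in the one-time case it occupies most of Evans--Souganidis); you name it but do not carry it out, and in the multitime setting it is further complicated by the complete integrability constraints on admissible controls, which restrict the spliced strategies you are allowed to build. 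Until those points are supplied, what you have is a plausible program rather than a proof --- which, to be fair, is also all the paper itself provides.
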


\begin{remark} If we introduce the so-called upper and lower Hamiltonian defined respectively by
$$H^+(t,x,p)=\min_{v\in \mathcal{V}} \max_{u \in \mathcal{U}}\lbrace p_i^\alpha(t) X_\alpha^i(t,x,u,v)+L(t,x,u,v)\rbrace,$$
$$H^-(t,x,p)=\max_{u\in \mathcal{U}} \min_{v\in \mathcal{V}}\lbrace p_i^\alpha(t) X_\alpha^i(t,x,u,v)+L(t,x,u,v)\rbrace,$$
then the multitime (dHJIU) PDEs can be written in the form
$$\frac{\partial \textbf{M}^\alpha}{\partial t^\alpha}(t,x)+H^+\left( t,x,\frac{\partial \textbf{M}}{\partial x}(t,x)\right) =0$$
and
$$\frac{\partial \textbf{m}^\alpha}{\partial t^\alpha}(t,x)+H^-\left( t,x,\frac{\partial \textbf{m}}{\partial x}(t,x)\right) =0.$$
\end{remark}

The proof will be given in another paper.

\section{Representation formula of viscosity \\solution for multitime (dHJ) PDE}

In this section, we want to obtain a representation formula for
the viscosity solution $\textbf{M}=(\textbf{M}^\alpha(t,x))$ of the multitime (dHJ) PDE

\begin{equation}
 \frac{\partial \textbf{M}^\alpha}{\partial t^\alpha}+H\left( t,x,\frac{\partial \textbf{M}}{\partial x}(t,x)\right) =0, (t,x)\in \Omega_{0T}\times \mathbb{R}^n,\alpha=\overline{1,m},
\end{equation}

\begin{equation}
\textbf{M}^\alpha(0,x)=g^\alpha(x), x\in \mathbb{R}^n.
\end{equation}
On the other hand, the upper value function $M(t,x)$, generated by $\textbf{M}=(\textbf{M}^\alpha(t,x))$, satisfies the inequalities
\begin{equation}\label{eq:11}
\left\{\begin{array}{ll}
\vert M(t,x)\vert\leq D\\
\vert M(t,x)-M(\hat{t},\hat{x}\vert\leq E\,\, vol (\Omega_{\hat{t}\,t})+ D\,\Vert x-\hat{x}\Vert,
\end{array}\right. \end{equation}
for some constant $E,\,D$ (for $m=1,$ see also [4]).

Also, we assume that $g:\mathbb{R}^n \rightarrow \mathbb{R}^m, H:\Omega_{0T} \times \mathbb{R}^n\times \mathbb{R}^{nm}\rightarrow \mathbb{R},$ satisfy the inequalities
$$\left\{\begin{array}{ll}
\Vert g (x)\Vert\leq B\\
\Vert g(x)-g (\hat{x})\Vert\leq B \Vert x-\hat{x}\Vert
\end{array}\right.$$
and
\begin{equation}\label{eq:8}
\left\{\begin{array}{ll}
\vert H(t,x,0)\vert\leq K\\
\vert H (t,x,p)-H(\hat{t},\hat{x},\hat{p})\vert\leq K (vol(\Omega_{t\hat{t}})+\Vert x-\hat{x}\Vert +\Vert p-\hat{p}\Vert).
\end{array}\right. \end{equation}
By the norm of the matrix $p=(p^\alpha_i)$, we understand $||p||= \sqrt{\delta^{ij} \delta_{\alpha\beta} p^\alpha_i p^\beta_j}$.
Otherwise, in this paper, all norms of indexed variables are norms of vectors associated by re-indexing.

\begin{lemma}\label{l-2}
Let
\begin{equation}\label{eq:10}
\left\{\begin{array}{ll}
U=B(0,1)\subset \mathbb{R}^{n}\\
V={\cal B}(0,P)\subset \mathbb{R}^{mn}\\
X_\alpha(u)=Q_\alpha u,\, Q=(Q^i_{\alpha j}),\, ||Q||=K\\
L(t,x,u,v)=H (t,x,v)-<Q u,v>.
\end{array}\right.
\end{equation}
Suppose the Hamiltonian $H$ is a Lipschitz function. For some constant radius
$P>0$ and for each $t\in \Omega_{0T}, x \in \mathbb{R}^n,$ we have
$$H (t,x,p)=\max_{v \in {V}}\min_{u \in {U}}\left\lbrace p^\alpha_i(t) X^i_\alpha(u) + L(t,x,u,v)\right\rbrace ,$$
if $\Vert p\Vert \leq P.$
\end{lemma}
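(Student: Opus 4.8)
The plan is to substitute the explicit data of \eqref{eq:10} into the bracket $p^{\alpha}_{i}(t)\,X^{i}_{\alpha}(u)+L(t,x,u,v)$ and to watch it collapse. Because $X^{i}_{\alpha}(u)=Q^{i}_{\alpha j}u^{j}$, after the re-indexing convention of the paper one has $p^{\alpha}_{i}\,X^{i}_{\alpha}(u)=\langle Qu,p\rangle$ and $L(t,x,u,v)=H(t,x,v)-\langle Qu,v\rangle$, both inner products taken in $\mathbb{R}^{mn}$. Hence the bracket equals
$$H(t,x,v)+\langle Qu,\,p-v\rangle=H(t,x,v)+\langle u,\,Q^{T}(p-v)\rangle,$$
which, for each fixed $v$, is an affine function of $u$. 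Fixing $(t,x)$ and a covector $p$ with $\|p\|\le P$, the whole problem therefore reduces to a two-step scalar computation.

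First I would carry out the inner minimisation over $u\in U=B(0,1)$. The minimum of an affine functional over the closed unit ball is minus the Euclidean norm of its linear part, so
$$\min_{u\in U}\Big\{H(t,x,v)+\langle u,\,Q^{T}(p-v)\rangle\Big\}=H(t,x,v)-\big\|Q^{T}(p-v)\big\|.$$
Since $\|Q\|=K$ is arranged so that $Q^{T}$ acts as a $K$-scaled isometry on the differences $p-v$ that occur, this penalty term equals $K\|p-v\|$, and the lemma is reduced to the one-line claim
$$\max_{v\in V}\big\{\,H(t,x,v)-K\|p-v\|\,\big\}=H(t,x,p)\qquad(\|p\|\le P).$$

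To finish I would prove this identity by the two obvious inequalities. For ``$\ge$'': since $\|p\|\le P$ the choice $v=p\in V=\mathcal{B}(0,P)$ is admissible and returns exactly $H(t,x,p)$; compactness of $V$ together with (Lipschitz) continuity of $H$ guarantees the supremum is attained, so writing ``$\max$'' is legitimate. For ``$\le$'': for every $v\in V$ the Lipschitz estimate for $H$ in its last argument --- the second line of \eqref{eq:8} read with $t=\hat t$, $x=\hat x$ --- gives $H(t,x,v)-H(t,x,p)\le K\|v-p\|$, i.e.\ $H(t,x,v)-K\|p-v\|\le H(t,x,p)$. Combining the two inequalities yields the scalar identity, and unwinding the substitutions yields the assertion of the lemma.

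The step I expect to be delicate is the bookkeeping hidden in the inner minimisation: one must be sure that the penalty $\|Q^{T}(p-v)\|$ produced by ranging $u$ over the unit ball is large enough to absorb the increment $H(t,x,v)-H(t,x,p)$, and this is precisely why the Lipschitz constant of $H$ and the matrix norm $\|Q\|$ are both set equal to $K$, why $U$ is the \emph{unit} ball, and why $Q$ is taken to act isometrically (up to the factor $K$) on the relevant directions. A secondary point requiring care is the dimensional consistency of the re-indexing $\mathbb{R}^{n}\leftrightarrow\mathbb{R}^{mn}$ between the control $u$, the matrix $Q=(Q^{i}_{\alpha j})$ and the covector $p=(p^{\alpha}_{i})$, so that $\langle Qu,p\rangle$ and $\langle Qu,v\rangle$ are formed in one and the same inner-product space; once these identifications are pinned down, the only role left for the radius $P$ is to make $v=p$ admissible, which is exactly the hypothesis $\|p\|\le P$.
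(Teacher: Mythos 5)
Your proof follows essentially the same route as the paper's: rewrite the bracket as $H(t,x,v)+\langle Qu,\,p-v\rangle$, carry out the inner minimisation over the unit ball via Cauchy--Schwarz to produce the penalty $-K\Vert p-v\Vert$, and then identify $\max_{v\in V}\{H(t,x,v)-K\Vert p-v\Vert\}$ with $H(t,x,p)$ using the Lipschitz bound for ``$\leq$'' and the admissibility of $v=p$ (which is exactly where $\Vert p\Vert\leq P$ enters) for ``$\geq$''. You are in fact more explicit than the paper about the one genuinely delicate point, namely that $\min_{u\in B(0,1)}\langle Qu,\,p-v\rangle=-K\Vert p-v\Vert$ (rather than merely $\geq -K\Vert p-v\Vert$) requires $Q^{T}$ to act as a $K$-scaled isometry on the relevant differences $p-v$ and not just $\Vert Q\Vert=K$; the paper glosses over this with the bare phrase ``by Cauchy--Schwarz formula''.
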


\begin{proof} By the assumption $H (t,x,v)-H(t,x,p)\leq K \Vert p-v\Vert,\,K = ||Q||$, by Cauchy-Schwarz formula
and by the condition $||u||\leq 1$, we have
\begin{equation}\begin{split}
H (t,x,p)\ & =\max_{v\in {V}} \left\lbrace H(t,x,v) - K\Vert p-v\Vert\right\rbrace \\ & =\max_{v \in {V}}\min_{u\in {U}}\left\lbrace H(t,x,v)+<Q u,p-v>\right\rbrace,
\end{split}\end{equation}
for any $x\in \mathbb{R}^n$.
\end{proof}

\textbf{Max-min representation of a Lipschitz function as positive homogeneous functions} (for m=1, see also [2], [3]).

\begin{lemma}
Let $H$ be a Lipschitz m-form which is homogeneous in the matrix $p$, i.e.,
$$H(t,x,\lambda p)=\lambda H(t,x,p),\,\lambda \geq 0.$$
Then there exist compact sets $U \subset \mathbb{R}^{2n}$, $V \subset \mathbb{R}^{2mn}$ and
vector fields $$X_\alpha:\Omega_{0T}\times \mathbb{R}^n\times U\times V\rightarrow \mathbb{R}^n,\,\, \alpha=1,...,m,$$
satisfying
$$\Vert X_\alpha(x)-X_\alpha(\hat{x})\Vert \leq A_\alpha \Vert x-\hat{x}\Vert,$$
for each $\alpha,$ and such that
$$H (t,x,p)=\max_{v \in V}\min_{u \in U}\left\lbrace p^\alpha_i(t)X^i_\alpha(t,x,u,v) \right\rbrace ,$$
for all $t\in \Omega_{0T},\,\, x\in \mathbb{R}^n, \,\,p \in \mathbb{R}^{mn}.$
\end{lemma}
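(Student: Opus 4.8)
The plan is to reduce this ``homogeneous'' max-min representation to the already-proved Lemma~\ref{l-2} by a standard trick from the theory of differential games: a positively homogeneous Lipschitz function of degree one can be written as a supremum (over $v$ on a sphere) of the linear functionals tangent to its graph, and then each such linear piece can itself be written as an infimum over $u$ in a ball. Concretely, first I would set $K$ to be the Lipschitz constant of $H$ in the $p$-variable and observe that, by homogeneity, for every fixed $(t,x)$ the function $p\mapsto H(t,x,p)$ is determined by its values on the unit sphere $\{\|p\|\le 1\}$; in particular $|H(t,x,p)|\le K\|p\|$ and $H(t,x,p)=\max_{\|v\|\le K}\langle\text{(something)},p\rangle$-type formulas are available. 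The concrete construction I would use is: take $V=\mathcal B(0,K)\subset\mathbb R^{mn}$ (the role of the second ``control''), take $U=B(0,1)\subset\mathbb R^{mn}$, and define $X_\alpha(t,x,u,v)$ so that $p^\alpha_i X^i_\alpha = H(t,x,v) + \langle u, p-v\rangle$ in the appropriate re-indexed inner product, exactly as in the proof of Lemma~\ref{l-2} but now with the running cost $L$ absorbed into $X_\alpha$ because $H$ has no additive $L$ here.

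The key steps, in order, are: (1) using Lipschitz-ness and Rademacher/convex-analysis, write $H(t,x,p)=\max_{\|v\|\le K}\{H(t,x,v)+\langle \nabla_p\text{-like term}, p-v\rangle\}$ — but to avoid differentiability issues I would instead invoke the already established identity from Lemma~\ref{l-2}, namely $H(t,x,p)=\max_{v\in V}\min_{u\in U}\{H(t,x,v)+\langle Qu,p-v\rangle\}$, valid for $\|p\|\le P$; (2) remove the restriction $\|p\|\le P$ by exploiting homogeneity: for general $p\ne 0$ write $p=\lambda\hat p$ with $\|\hat p\|\le P$ and $\lambda\ge 0$, apply the bounded-$p$ identity to $\hat p$, and multiply through by $\lambda$, checking that the max-min commutes with multiplication by the nonnegative scalar $\lambda$ and that the term $H(t,x,v)$ scales correctly — this is where homogeneity of degree one is essential, since $\lambda H(t,x,\hat p)=H(t,x,\lambda\hat p)$; (3) repackage the data: set $X^i_\alpha(t,x,u,v)$ to be the vector field whose contraction with $p$ gives $H(t,x,v)+\langle Qu,p-v\rangle$ after the re-indexing $p=(p^\alpha_i)\leftrightarrow$ vector, noting that $H(t,x,v)$ is a constant (in $p$) so it must be encoded via an extra coordinate — this forces the stated enlargement of the control sets to $\mathbb R^{2n}$ and $\mathbb R^{2mn}$ rather than $\mathbb R^n$, $\mathbb R^{mn}$; (4) verify the Lipschitz estimate $\|X_\alpha(t,x,u,v)-X_\alpha(t,\hat x,u,v)\|\le A_\alpha\|x-\hat x\|$, which follows because the only $x$-dependence sits inside $H(t,x,v)$, itself Lipschitz in $x$ with constant $K$, so one may take $A_\alpha = K$ (or $K/m$, depending on the re-indexing convention).

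The main obstacle I expect is step~(3): the bookkeeping that turns the affine-in-$p$ expression $H(t,x,v)+\langle Qu,p-v\rangle$ into a genuinely \emph{linear}-in-$p$ expression $p^\alpha_i X^i_\alpha$. Since $H(t,x,v)$ and $-\langle Qu,v\rangle$ are independent of $p$, they cannot be absorbed into $p^\alpha_i X^i_\alpha$ without an auxiliary ``$p$-slot.'' The clean fix is to work in one extra dimension: append a fictitious coordinate $p_0\equiv 1$ (equivalently, embed $\mathbb R^{mn}\hookrightarrow\mathbb R^{mn+1}$), let the $0$-th component of the vector field carry the value $H(t,x,v)-\langle Qu,v\rangle$, and let the remaining components carry $Qu$; this is exactly why $U$ and $V$ must live in spaces of doubled dimension, and it is the only genuinely nontrivial modeling decision in the proof. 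Everything after that — the Lipschitz bound, the compactness of $U$ and $V$, and the verification that the resulting max-min reproduces $H$ for \emph{all} $p\in\mathbb R^{mn}$ — is routine, reusing Lemma~\ref{l-2} and the homogeneity scaling of step~(2).
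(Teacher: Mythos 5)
Your overall strategy (reduce to Lemma \ref{l-2} on a bounded set of $p$'s, then use homogeneity of degree one to extend to all $p$, doubling the control dimensions to absorb the terms that are not linear in $p$) is the right one and matches the paper up to step (2). But your resolution of the ``main obstacle'' in step (3) is where the argument breaks, and it breaks in a way that does not prove the stated lemma. Appending a fictitious coordinate $p_0\equiv 1$ changes the object being represented: the lemma demands $H(t,x,p)=\max_{v}\min_{u}\, p^\alpha_i X^i_\alpha(t,x,u,v)$ with the contraction taken against the \emph{actual} matrix $p\in\mathbb{R}^{mn}$, i.e.\ an expression exactly linear in $p$, not an affine expression rewritten as linear in an augmented variable $(p,1)$. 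Worse, the augmentation is inconsistent with your own step (2): after scaling $p=\lambda\hat p$ and multiplying through by $\lambda$, the residual term is $\lambda L(t,x,u,v)=L\,\Vert p\Vert$ (positively homogeneous of degree one in $p$ but not constant), whereas a slot $p_0\equiv 1$ can only carry a term of degree zero in $p$. So the two halves of your proof do not fit together.

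The device the paper actually uses is different and is the one you need: write $\Vert p\Vert=\max_{v^2\in\mathcal{B}(0,1)}\langle v^2,p\rangle$ (attained at $v^2=p/\Vert p\Vert$), which is genuinely linear in $p$ for each fixed value of the \emph{extra control} $v^2\in\mathbb{R}^{mn}$. Then, restricting Lemma \ref{l-2} to the unit sphere $\Vert\eta\Vert=1$ and multiplying by $\Vert p\Vert$, the troublesome term $L(t,x,u^1,v^1)\Vert p\Vert$ is split as $C\Vert p\Vert+(L-C)\Vert p\Vert$ with $\vert L\vert\le C$, and each piece is converted into $\langle Cv^2,p\rangle+(L-C)\langle v^2,p\rangle$ via the extra max (and a matching dummy min over $u^2$), after which the min over $u^1$ and the max over $v^2$ are interchanged to reach the form $\max_{v}\min_{u}\langle X(t,x,u,v),p\rangle$. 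This is why $U\subset\mathbb{R}^{2n}$ and $V\subset\mathbb{R}^{2mn}$: the doubling comes from the additional controls $u^2,v^2$ used to linearize $\Vert p\Vert$, not from an extra coordinate of $p$. Your Lipschitz verification in step (4) is fine and carries over, but as written your construction does not yield the claimed identity for all $p\in\mathbb{R}^{mn}$.
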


\begin{proof} Let $u=(u^1,u^2)$ be a $2n$-dimensional control, $v=(v^1,v^2)$ be a $2mn$-dimensional control.
We introduce the notations
\begin{equation}\label{eq:9}
\left\{\begin{array}{ll}
U= B(0,1)\times B(0,1)\subset \mathbb{R}^{2n}\\
V= {\cal B}(0,1)\times {\cal B}(0,1)\subset \mathbb{R}^{2mn}\\
L(t,x,u^1,v^1)=H(t,x,v^1)-<Q u^1,v^1>\\
X_\alpha(t,x,u,v)=Q_\alpha u^1+C v^2_\alpha+(L(t,x,u^1,v^1)-C)v^2_\alpha.
\end{array}\right.
\end{equation}

According to Lemma $\eqref{l-2}$ and the assumptions $\eqref{eq:9},$ if $\Vert\eta\Vert =1,$ we have
\begin{equation}\begin{split}
H (t,x,\eta)\ &  =\max_{v^1 \in V^1}\min_{u^1\in U^1}\left\lbrace <Q u^1,\eta> + L (t,x,u^1, v^1)\right\rbrace.
\end{split}\end{equation}
for $U^1=B(0,1)\subset \mathbb{R}^n$, $V^1={\cal B}(0,1)\subset \mathbb{R}^{mn}$.

For any non-zero matrix $p=(p^\alpha_i),$ we can write
\begin{equation}\begin{split}
H(t,x,p)\ & =\Vert p\Vert H \left( t,x,\frac{p}{\Vert p\Vert}\right) \\ & =\max_{v^1\in V^1}\min_{u^1\in U^1}\left\lbrace <Q u^1,p>+L (t,x,u^1,v^1)\Vert p\Vert\right\rbrace .
\end{split}\end{equation}
Then, if we choose $C>0$ such that $\vert L\vert\leq C,$ we find

\begin{equation}\begin{split}
H (t,x,p)\ &  =\max_{v^1 \in V^1}\min_{u^1\in U^1}\bigg\{ <Q u^1,p> +C\Vert p\Vert +( L (t,x,u^1, v^1)-C)\Vert p\Vert\bigg\}\\ & =\max_{v^1 \in V^1}\min_{u^1\in U^1}\max_{v^2 \in V^2}\min_{u^2\in U^2}\bigg\{ <Q u^1,p> +<C v^2, p>\\ &  +( L (t,x,u^1, v^1)-C) <v^2,p>\bigg\} \\ & =\max_{v \in V}\min_{u\in U}\bigg\{ <X(t,x,u,v),p>\bigg\} .
\end{split}\end{equation}

Now, interchanging $\min_{u^1\in U^1}$ and  $\max_{v^2 \in V^1},$ the result in Lemma follows.
\end{proof}



\end{document}